\newcommand{\F}{{\mathcal F}}
\newcommand{\R}{{\mathbb R}}
\DeclareMathOperator{\Var}{Var}
\newcommand{\0}{\textbf{0}}
\renewcommand{\P}{{\mathbb P}}
\newcommand{\tmix}{t_{{\rm mix}}}
\newcommand{\ep}{\varepsilon}
\newcommand{\E}{{\mathbb E}}
\newcommand{\Z}{{\mathbb Z}}
\newtheorem{theorem}{Theorem}
\newtheorem{lem}{Lemma}
\theoremstyle{remark}
\newtheorem{rmk}{Remark}
\title[A fast mixing hypercube chain]{Fast mixing of a randomized shift-register Markov chain} \author{David A. Levin}
\email{dlevin@uoregon.edu} \author{Chandan Tankala}
\email{chandant@uoregon.edu} \address{Department of Mathematics, The
  University of Oregon, Eugene, OR 97403-1222 USA}
\keywords{Markov chains, fast mixing, cutoff, hypercube \\}
\subjclass{Primary 60J10; Secondary 94A55}
\begin{document}
\begin{abstract}
  We present a Markov chain on the $n$-dimensional hypercube
  $\{0,1\}^n$ which satisfies $\tmix^{(n)}(\ep) = n[1 + o(1)]$.  This
  Markov chain alternates between random and deterministic moves and
  we prove that the chain has cutoff with a window of size at most
  $O(n^{0.5+\delta})$ where $\delta>0$.  The deterministic moves
  correspond to a linear shift register.
\end{abstract}
\maketitle

\section{Introduction}
\label{section:1}

Developing Markov chain Monte Carlo (MCMC) algorithms with fast mixing times remains a problem of practical importance.   One would like to make computationally tractable modifications to existing chains which decrease the time required to obtain near equilibrium samples. 

The \emph{mixing time} of an ergodic finite Markov chain
$(X_t)$ with stationary distribution $\pi$ is defined as
\begin{equation}
    \tmix(\ep) = \min\Bigl\{ t \geq 0 \,:\, \max_{x} \|
        \P_x(X_t \in \cdot ) - \pi \|_{{\rm TV}} < \ep \Bigr\} \,,
\end{equation}
and we write $\tmix = \tmix(1/4)$.

A theoretical algorithm for chains with uniform stationary distribution is analyzed in Chatterjee and Diaconis \cite{chatterjee2020speeding}. They proposed chains
that alternate between random steps made according to a probability transition matrix and deterministic steps defined by a bijection $f$ on the state space.  Supposing the state-space has size $n$, the transition matrix satisfies a one-step reversibility condition, and
$f$ obeys an \emph{expansion condition}, 
they proved that $\tmix = O(\log n)$. However, they note that finding an explicit bijection $f$ satisfying the expansion condition can be difficult even for simple state spaces like $\Z_n$.

In this paper, we analyze a Markov chain on the hypercube $\{0,1\}^n$ of the form $P\Pi$ for an explicit $\Pi$, where $P$ corresponds to the usual lazy random walk on $\{0,1\}^n$.  This chain may be of independent interest, as the deterministic transformation $f$ on the state space is a ``shift register'' operator. Such shift registers have many applications in cryptography, psuedo-random number generation, coding, and other fields. See, for example, \textcite{G:SRS} for background on shift registers.
 
The \emph{lazy random walk} on $\{0,1\}^n$ makes transitions as follows: when the current state is $x$, a coordinate from
$i \in \{1,2,\ldots,n\}$ is generated uniformly at random, and an independent random bit $R$ is added (mod $2$) to the bit $x_i$ at coordinate $i$.  The new state obtained is thus
\begin{equation} \label{eq:P} x \mapsto x' = (x_1, \ldots, x_i \oplus   R, \ldots, x_n) \,.
\end{equation}
We will denote the transition matrix of this chain by $P$. For a chain with transition probabilities $Q$ on $S$ and stationary distribution $\pi$, let
\[
  d(t) = d_n(t) = \max_{x \in S} \| Q^t(x,\cdot) - \pi
  \|_{{\rm TV}} \,.
\]
A sequence of chains indexed by $n$ has a \emph{cutoff} if, for $t_n := \tmix^{(n)}$, there exists a \emph{window sequence} $\{w_n\}$ with $w_n = o(t_n)$ such that
\begin{align*}
  \limsup_{n \to \infty} d(t_n + w_n) & = 0 \\
  \liminf_{n \to \infty} d(t_n - w_n) & = 1 \,.
\end{align*}
For background on mixing times, cutoff, and related material, see, for example, \textcite{levin2017markov}.

It is well-known that for the lazy random walk on $\{0,1\}^n$,
\[
  \tmix(\ep) = \frac{1}{2}n \log n[1 + o(1)] \,,
\]
with a cutoff.  (See \textcite{DGM} where precise information on the total variation distance is calculated. The difference of a factor of $2$ above comes from the laziness in our version.)

A natural deterministic ``mixing'' transformation on $\{0,1\}^n$ is the ``linear shift register'' which takes the xor sum of the bits in the current word $x = (x_1,\ldots,x_n)$ and appends it to the right-hand side, dropping the left-most bit:
\begin{equation} \label{eq:fdef} x \mapsto f(x) = \Bigl(x_2, \ldots,
  x_{n-1}, \oplus_{i=1}^n x_{i}\Bigr) \,.
\end{equation}
 
Let $\Pi$ denote the permutation matrix corresponding to this transformation, so that
\[
  \Pi_{i,j} =
  \begin{cases}
    1 & \text{if } j = f(i) \\
    0 & \text{else}.
  \end{cases}
\]
 
The chain studied in the sequel has the transition matrix $Q_1 = P\Pi$, whose dynamics are simply described by combining the stochastic operation \eqref{eq:P} with the deterministic $f$ in \eqref{eq:fdef}:
\[
  x \mapsto x' \mapsto f(x') \,.
\]

Let $\log $ stand for the natural logarithm. The main result here is:
\begin{theorem} \label{thm:main} For the chain $Q_1$, \hspace{1.2in}
  \begin{itemize}
  \item[(i)] For $n \geq 5$,
    \[
      d_n(n+1) \leq \frac{2}{n} \,.
    \]
  \item[(ii)] For any $1/2 < \alpha < 1$, if $t_n = n - n^{\alpha}$
    \[
      d_n(t_n) \geq \| Q_1^{t_n}(0, \cdot) - \pi \|_{{\rm TV}} \geq 1 -
      o(1) \,.
    \]
  \end{itemize}
  Thus, the sequence of chains has a cutoff at time $n$ with a window of at most size $n^{1/2 + \delta}$ for any $\delta > 0$.
\end{theorem}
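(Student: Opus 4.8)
The plan is to analyze the chain $Q_1 = P\Pi$ by tracking how the randomness injected by the lazy random walk propagates through the deterministic shift register $f$. The key insight is that the shift register is a linear map over $\mathbb{F}_2$, so the whole chain can be understood through the algebraic action of $f$ on coordinates. I would first establish part (i), the upper bound. Starting from any state $x$, at each step a uniformly random coordinate is chosen and an independent fair bit $R$ is XORed into it, after which $f$ is applied. The crucial observation is that the fresh random bit introduced at each step, combined with the shifting, means that after roughly $n$ steps every coordinate has ``seen'' an independent random bit with high probability. Concretely, I would set up a coupling or a direct tracking of which coordinates remain ``undetermined'' by injected randomness; because $f$ appends a parity bit and shifts left, a random bit injected at a given time eventually influences the rightmost coordinate and is recycled into the parity. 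I would argue that once a random fair bit has been injected into the word and mixed via the linear map, the resulting distribution is uniform on an affine subspace whose dimension grows by one per step (until saturation at $n$). A clean route is to show that $Q_1^{n+1}(x,\cdot)$ is a mixture that is close in total variation to uniform, bounding the discrepancy by the probability that fewer than $n$ distinct fresh bits have been injected over $n+1$ steps; this is a coupon-collector-type estimate, but the shift register structure should let us avoid the full $\tfrac12 n\log n$ coupon-collector cost because injected bits are not lost but folded into the parity.

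For the lower bound, part (ii), I would exhibit a distinguishing statistic that separates $Q_1^{t_n}(0,\cdot)$ from $\pi$ when $t_n = n - n^\alpha$. Starting from $\0$, after $t_n < n$ steps at most $t_n$ coordinates have had any random bit injected, so there should be a linear functional (a fixed parity of certain coordinates, dictated by the algebra of $f$) that is still deterministic, or nearly so, under $Q_1^{t_n}$ but uniform under $\pi$. I would track the image of $\0$ under the linear dynamics: since $f$ is linear and the random injections are localized, the state after $t_n$ steps lies (with the randomness factored out) in a coset of a subspace of dimension at most $t_n < n$. The plan is to compute the mean and variance under $Q_1^{t_n}$ of a carefully chosen $\pm 1$ (Fourier/Walsh) character $\chi_S(x) = (-1)^{\sum_{i \in S} x_i}$, show that its expectation stays bounded away from $0$ (its value under $\pi$) by an amount large compared with its standard deviation, and then invoke a second-moment / Chebyshev argument to conclude $\|Q_1^{t_n}(0,\cdot) - \pi\|_{\mathrm{TV}} \geq 1 - o(1)$. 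The gap $n^\alpha$ with $\alpha > 1/2$ is exactly what is needed to make the deviation dominate the fluctuations.

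The main obstacle I anticipate is correctly accounting for the interaction between the random injections and the linear recycling of bits through the parity tap in $f$. Unlike a pure coordinate-refreshing walk, here an injected random bit is not simply overwritten: it is shifted leftward and simultaneously fed into the parity sum, so the ``information'' it carries persists and recombines. Making the dimension-growth argument precise therefore requires understanding the $\mathbb{F}_2$-linear algebra of the companion matrix of $f$ — in particular identifying which Walsh characters $\chi_S$ are ``slow'' to randomize — and showing that by time $n+1$ essentially all of them have been hit. For the lower bound, the delicate point is choosing the set $S$ so that $\chi_S$ is both genuinely informative (large bias) and has controllable variance; the right choice should come from the characters that the adjoint dynamics of $\Pi$ leaves unmixed for the longest time. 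Once the correct slow character is identified, both bounds should follow from first- and second-moment computations together with the coupon-collector-style concentration for the number of distinct coordinates refreshed.
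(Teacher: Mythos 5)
Your overall instinct---exploit the $\mathbb{F}_2$-linearity of $f$ and use Fourier/Walsh analysis---is the right one, and it is in fact the engine of the paper's upper bound. But both halves of your plan have genuine gaps. For the upper bound, the ``dimension grows by one per step until saturation'' mechanism does not work: writing $X_t = \bigl(\sum_{j=1}^t A^{t-j+1}\epsilon_j\bigr)\oplus A^t x$ with $A$ the companion matrix of $f$, the support of $X_t$ is already all of $\{0,1\}^n$ after one step (the update site is uniform over all $n$ coordinates), so there is no growing affine subspace to track; the issue is purely quantitative. Likewise, the ``probability that fewer than $n$ fresh bits have been injected'' is a coupon-collector quantity that is not small until time $\Theta(n\log n)$, so bounding the TV distance by it cannot give mixing at time $n$. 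What actually makes the proof work at time $n+1$ is the algebraic identity $A^{n+1}=I$: this makes every Fourier coefficient $\widehat{Q_1^{n+1}}(x,y)$ an explicit product of $n+1$ factors of the form $\frac12+\frac1{2n}\sum_i(-1)^{(\text{affine form in }y)}$, which vanishes exactly when the Hamming weight $W(y)$ is $1$ or $n$, and for $2\le W(y)=k\le n-1$ equals $(1-k/n)^{n-k+1}((k-1)/n)^k$ up to sign. One then needs the elementary but nontrivial estimate $\binom{n}{k}(1-k/n)^{2n-2k}(k/n)^{2k}\le n^{-2}$ for $2\le k\le n-2$ before summing and applying the $L^2$ upper bound lemma. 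None of these steps is present or implied in your outline, and without $A^{n+1}=I$ there is no reason the product of Fourier factors should be uniformly small at time $n+1$.

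The lower bound has a more decisive problem: a single Walsh character $\chi_S$ is $\pm1$-valued, so its mean gap between $Q_1^{t_n}(0,\cdot)$ and $\pi$ is $O(1)$ while its variance under $\pi$ is exactly $1$; a second-moment/Chebyshev argument with such a statistic can never produce a lower bound of $1-o(1)$, only a constant. (Nor is any character deterministic after step one, for the support reason above, so the ``coset of a small subspace'' picture fails.) You need a statistic that aggregates many characters so that the signal is $\gg$ the noise; the paper uses the Hamming weight $W_t$, shows via an explicit recursion that $\E_0 W_{t_n} \le \frac n2 - \frac1{2e}n^\alpha$ at $t_n=n-n^\alpha$, and---this is the real work---proves $\Var(W_{t_n})=O(n)$ by a bounded-differences (Efron--Stein) argument: flipping one driving bit or changing one update coordinate alters the final Hamming weight by at most $2$, because a single discrepancy propagates through $f$ while keeping the Hamming distance between the two trajectories bounded by $2$. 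Your proposal contains no mechanism for controlling the variance of any aggregate statistic, and the parity tap in $f$ makes naive independence arguments fail, so this step cannot be waved through.
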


\begin{rmk}
  If the transformation $f$ obeys the expansion condition of
  \textcite{chatterjee2020speeding}, then the results therein yield a mixing time of order $n$.  We were unable to directly verify that $f$ does obey this condition.  Moreover, the result in Theorem \ref{thm:main} establishes the stronger cut-off property.
\end{rmk}
     
 
\begin{rmk}
  Obviously a simple way to exactly randomize $n$ bits in exactly $n$ steps is to simply randomize in sequence, say from left to right, each bit. This is called \emph{systematic scan}; Systematic scans avoid an extra factor of $\log n$ needed for \emph{random updates}
  to touch a sufficient number of bits.  (A ``coupon-collector'' argument shows that to touch all but $O(\sqrt{n})$ bits using random updates -- enough to achieve small total-variation distance from uniform -- order $n\log n$ steps are required.) Thus, clearly our interest in analyzing this chain is not for direct simulation of $n$
  independent bits!  Rather, we are motivated both by the potential for explicit deterministic moves to speed-up of Markov chains, and also by this particular chain which randomizes the well-known shift-register dynamical system.  
\end{rmk}

This paper is organised as follows. In Section \ref{section:2} we review some related results. The upper bound in Theorem \ref{thm:main} is proved in Section \ref{section:5}, and the lower bound is established in Section \ref{section:4}. In section \ref{section:6}, a chain is analyzed that is similar to the chain of Theorem \ref{thm:main}, but always updates the same location.

\section{Related previous work} \label{section:2}

\subsection{Markov chains on hypercube}


Previous work on combining deterministic transformation with random moves on a hypercube is \textcite{diaconis1992affine}.  They study the walk $\{X_t\}$ described by
$X_{t+1}=AX_t+\epsilon_{t+1}$ where $A$ is a $n\times n$ lower
triangular matrix and $\epsilon_t$ are i.i.d.\ vectors having the following distribution: The variable $\epsilon_{t}=\mathbf{0}$ with probability $\theta\neq \frac{1}{2}$ while $\epsilon_{t}=e_1$ with probability $1-\theta$. Here, $\mathbf{0}$ is a vector of zeros,
$\mathbf{e_1}$ is a vector with a one in the first coordinate and zeros elsewhere. Fourier analysis is used to show that $O(n\log n)$ steps are necessary and sufficient for mixing, and they prove a sharp result in both directions. This line of work is a specific case of a random walk on a finite group $G$ described as $X_{t+1}=A(X_t)\epsilon_{t+1}$, where $A$ is an automorphism of $G$ and $\epsilon_1,\epsilon_2,\cdots$ are i.i.d. with some distribution $\mu$ on $G$. In the case of \textcite{diaconis1992affine}, $G=\Z_2^n$ and the automorphism $A$ is a matrix. By comparison, our chain studied here mixes in only $n(1 + o(1))$ steps.

Another relevant (random) chain on $\{0,1\}^n$ is analyzed by \textcite{wilson1997random}.  A subset of size $p$ from $\Z_2^n$ is chosen uniformly at random, and the graph $G$ with vertex set $\Z_2^n$ is formed which contains an edge between vertices if and only if their difference is in $S$.   
\textcite{wilson1997random} considered the random walk on the random graph $G$.
It is shown that if $p=cn$ where $c>1$ is a constant, then the mixing time is linear in $n$ with high probability (over the choice of $S$) as $n \to \infty$.
This Markov chain depends on the random environment to produce the speedup.

Finally, another example of cutoff for a Markov chain on a hypercube is \textcite{ben2018cutoff}. This random walk moves by picking an ordered
pair $(i, j)$ of distinct coordinates uniformly at random and adding the bit at location $i$ to the bit at location $j$, modulo $2$. They proved that this Markov chain has cutoff at time $\frac{3}{2}n\log n$ with window of size $n$, so the mixing time is the same order as that of the ordinary random walk.

\subsection{Related approaches to speeding up mixing}
 
Ben-Hamou and Peres \cite{ben2021cutoff} refined
the results of \textcite{chatterjee2020speeding}, 
proving further that under mild assumptions on $P$ ``typical'' $f$ yield a mixing time of order $\log n$ with a cutoff.   
In particular, they show that 
if a permutation matrix $\Pi$ is selected uniformly at random, then 
the (random) chain $Q=P\Pi$ has 
cutoff at $\frac{\log  n}{\textbf{h}}$ with high probability (with respect to the selection of $\Pi$).    Here \textbf{h} is the entropy rate of $P$ and
$n$ is the size of state space.
Like the chain in \textcite{wilson1997random}, the random environment is critical to the analysis.   However, in specific applications, one would like to know an explicit deterministic permutation $\Pi$ that mixes in $O(\log n)$ and does not require storage of the matrix $\Pi$, particularly when the state space increases exponentially with $n$.

A method for speeding up mixing called \emph{lifting} was introduced by \textcite{diaconis2000analysisnonrev}.
The idea behind this technique is to create ``long cycles'' and introduce non-reversibility.    For example, for simple random walk on the $n$-path, 
the mixing time of the lifting is $O(n)$, whereas the mixing time on the path is $\Theta(n^2)$.   Thus this method can provide a speed-up of square root of the mixing time of the original chain.    \textcite{chen1999lifting} give an explicit lower bound on the mixing time of the lifted chain in terms of the original chain. 
The chain we study has a similar flavor in that the transformation $f$ creates non-reversibility and long cycles.   

Another related speed-up technique is \emph{hit and run},  which introduces non-local  moves in a chosen direction. See the survey by \textcite{andersen2007hit}. \textcite{boardman2020hit} is a recent application of a top-to-random shuffle where it is shown that a speedup in mixing by a constant factor can be obtained for the $L^2$ and sup-norm. Jensen and Foster \cite{jensen2014level} have used this method to sample from  high-dimensional and multi-modal posterior distributions in Bayesian models, and compared that with Gibbs and Hamiltonian Monte Carlo algorithms. In the physics literature, non-reversible chains
are constructed from a reversible chains without augmenting the state space (in contrast to lifting) by introducing \emph{vorticity},
which is similar in spirit to the long cycles generated by lifting;
see, for example, Bierkens \cite{bierkens2016non}, which analyzes a non-reversible version of Metropolis-Hastings.

As mentioned above, there are obvious 
other methods to obtain a fast uniform sample from $\{0,1\}^n$,  In particular systematic scan, which generates an exact sample in precisely $n$ steps! See \textcite{diaconis2000analysis} for a comparison of systematic and random scans on different finite groups.

\section{Upper bound of Theorem \ref{thm:main}}
\label{section:5}

The proof is based on Fourier analysis on $\Z_2^n$.
Let A be a matrix defined as follows
\begin{equation} \label{eq:Adefn}
  A:=\begin{bmatrix}0&1&0&\cdots 0\\
    0&0&1&\cdots 0\\
    \vdots&\ddots \\
    0&0&0&\cdots 1 \\
    1&1&1&\cdots1
  \end{bmatrix}_{n\times n} \,.
\end{equation}
 
Let $\{\epsilon_i\}$ be i.i.d.\ random vectors and have the following
distribution
\begin{align}
  \label{eq:5.1}
  \epsilon_i = \begin{cases}
    \textbf{0} \quad \text{with probability } \frac{1}{2}  \\
    e_1 \quad \text{with probability } \frac{1}{2n}  \\
    \vdots \\
    e_n \quad \text{with probability } \frac{1}{2n}  \,,\\
  \end{cases}
\end{align}
where $e_i = (0,\ldots,\underbrace{1}_{\text{$i$-th
    place}},\ldots,0)$.
The random walk $X_t$ with transition matrix $Q_1$ and
$X_0=x$ can be described as
\[
  X_t = A(X_{t-1}\oplus \epsilon_{t}) \,.
\]
The matrix arithmetic above is all modulo $2$.  Induction shows that
\begin{equation}
  X_t = \left(\sum_{j=1}^t A^{t-j+1}\epsilon_j\right) \oplus A^tx \label{eq:5.2} \,,
\end{equation}
again where the matrix multiplication and vector sums are over the
field $\mathbb{Z}_2$.

\begin{lem}
  The matrix $A$ in \eqref{eq:Adefn} satisfies $A^{n+1}=I_{n\times
    n}$.
  \label{lemma:5.1}
\end{lem}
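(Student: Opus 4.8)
The plan is to recognize $A$ as (the transpose of) a companion matrix and then apply the Cayley--Hamilton theorem over the field $\Z_2$. First I would compute the characteristic polynomial of $A$. Since the map $x \mapsto Ax$ shifts every coordinate down by one and deposits the full XOR-sum $\oplus_{i=1}^{n} x_i$ in the last slot, $A$ is in companion form with all feedback coefficients equal to $1$. Expanding $\det(\lambda I - A)$ by the standard companion-matrix computation, and using that over $\Z_2$ all signs are irrelevant, yields
\[
  p(\lambda) = \lambda^n + \lambda^{n-1} + \cdots + \lambda + 1 = \sum_{i=0}^{n} \lambda^{i} \,.
\]

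Next I would record the elementary identity over $\Z_2$,
\[
  (\lambda + 1)\,p(\lambda) = \lambda^{n+1} + 1 \,,
\]
which follows by telescoping: in $(\lambda+1)\sum_{i=0}^{n}\lambda^i = \sum_{i=1}^{n+1}\lambda^i + \sum_{i=0}^{n}\lambda^i$ every intermediate power $\lambda^1,\dots,\lambda^n$ occurs twice and hence cancels modulo $2$, leaving only $\lambda^{n+1}+1$. Cayley--Hamilton then gives $p(A) = 0$, and substituting $A$ into the identity yields
\[
  A^{n+1} + I = (A + I)\,p(A) = (A+I)\cdot 0 = 0 \,,
\]
so $A^{n+1} = I$ over $\Z_2$, as claimed. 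Note that we never need to pin down the exact multiplicative order of $A$, only that $n+1$ is a multiple of it; thus the possibility that the minimal polynomial of $A$ is a proper divisor of $p$ causes no trouble.

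There is no genuine obstacle here, only one point demanding care: correctly identifying the characteristic polynomial. One must confirm that the companion structure really produces the all-ones coefficient vector, and must keep in mind that working over $\Z_2$ collapses the usual $\pm$ signs, so that $(\lambda-1)$ and $(\lambda+1)$ coincide and $\lambda^{n+1}-1$ becomes $\lambda^{n+1}+1$. A quick sanity check at $n=2$, where direct multiplication gives $A^3 = I$, confirms both the polynomial and the conclusion.
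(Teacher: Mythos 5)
Your proof is correct, but it takes a genuinely different route from the paper. The paper argues by direct computation on basis vectors: it tracks the orbit of $e_1$ under repeated application of $A$, showing $Ae_1=e_n$, $A^2e_1=e_n+e_{n-1}$, \dots, $A^ne_1=e_2+e_1$, and hence $A^{n+1}e_1=e_1$, then asserts that the same check works for the remaining $e_j$. You instead observe that $A$ is in companion form, read off the characteristic polynomial $p(\lambda)=\sum_{i=0}^{n}\lambda^i$ over $\Z_2$, and combine Cayley--Hamilton with the factorization $(\lambda+1)p(\lambda)=\lambda^{n+1}+1$ to get $A^{n+1}+I=(A+I)p(A)=0$. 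Your identification of the companion structure and of the all-ones coefficient vector is accurate (over $\Z_2$ the signs $-c_i$ collapse to $1$), and the telescoping identity is standard, so the argument is complete. What your approach buys is a uniform treatment of all basis vectors at once (no ``the reader can check similarly'' step) and a conceptual explanation of why the exponent $n+1$ appears, namely that the characteristic polynomial divides $\lambda^{n+1}-1$; it also correctly notes that only divisibility of the order, not its exact value, is needed. What the paper's computation buys is that it is entirely elementary and self-contained, requiring no linear-algebra machinery beyond matrix-vector multiplication over $\Z_2$, and it makes the shift-register dynamics of $A$ visible, which is reused in the intuition elsewhere in the paper.
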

 
\begin{proof}
  Note that
  \begin{align*}
    Ae_1 &= e_n \\
    A^2e_1 = A(Ae_1)&= e_{n}+e_{n-1} \\
    A^3e_1 = A(A^2e_1) &= e_{n-1}+e_{n-2} \\
         &\vdots \\
    A^ne_1 &= e_2+e_1
  \end{align*}
  This implies that
  \[
    A^{n+1}e_1=A(A^ne_1)=A(e_2+e_1)=e_1 \,.
  \]
  The reader can check similarly that $A^{n+1}e_j=e_j$ for
  $2\leq j \leq n$.
\end{proof}

For $x,y\in \Z_2^n$, the Fourier transform of $Q_1^t(x,\cdot)$ at $y$
is defined as
\begin{align}
  \widehat{Q_1^t}(x,y)&:=\sum_{z\in \mathbb{Z}_2^n} (-1)^{y \cdot z}Q_1^t(x,z) \nonumber \\
                      &= \E[(-1)^{y \cdot X_{t}}] \label{eq:5.3} \\
                      &= (-1)^{y \cdot A^tx}\prod_{j=1}^{t} \E\left[ (-1)^{y \cdot A^{t-j+1}\epsilon_j}\right]\,. \label{eq:5.4}
\end{align}
The product $x \cdot y$ is the inner product $\sum_{i=1}^n x_i y_i$.
The equality \eqref{eq:5.4} follows from plugging \eqref{eq:5.2} into
\eqref{eq:5.3} and observing that $\epsilon_j$ are independent. The
following Lemma bounds the total variation distance; this is proved in
\textcite[Lemma 2.3]{diaconis1992affine}.

\begin{lem}
  \label{lemma:5.2}
  $\|Q_1^t(x,\cdot)-\pi\|_{{\rm TV}}^2 \leq \frac{1}{4}\sum_{y\neq 0}
  \left( \widehat{Q_1^t}(x,y)\right) ^2$.
\end{lem}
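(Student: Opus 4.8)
The plan is to bound the total variation distance by the $L^2$ norm of the signed measure $Q_1^t(x,\cdot)-\pi$ via Cauchy--Schwarz, and then pass to the Fourier side using Parseval's identity on the group $\Z_2^n$. Write $g(z) = Q_1^t(x,z)-\pi(z)$, where $\pi$ is the uniform stationary distribution $\pi(z)=2^{-n}$ on $\{0,1\}^n$. Since total variation is half the $L^1$ distance, $\|Q_1^t(x,\cdot)-\pi\|_{{\rm TV}} = \tfrac12\sum_{z}|g(z)|$.

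First I would apply Cauchy--Schwarz against the all-ones function to obtain
\[
  \sum_{z\in\Z_2^n}|g(z)| \;\le\; \Bigl(2^n \sum_{z\in\Z_2^n} g(z)^2\Bigr)^{1/2},
\]
so that $\|Q_1^t(x,\cdot)-\pi\|_{{\rm TV}}^2 \le \tfrac14\,2^n \sum_z g(z)^2$. Next, recall that the characters of $\Z_2^n$ are $\chi_y(z)=(-1)^{y\cdot z}$, and that Parseval's identity reads $\sum_z g(z)^2 = 2^{-n}\sum_y \widehat g(y)^2$, where $\widehat g(y)=\sum_z (-1)^{y\cdot z}g(z)$. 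By linearity of the Fourier transform, $\widehat g(y) = \widehat{Q_1^t}(x,y)-\widehat\pi(y)$.

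It then remains to identify the two terms. Using the orthogonality relation $\sum_z(-1)^{y\cdot z}=2^n\mathbf{1}\{y=0\}$ gives $\widehat\pi(y)=\mathbf{1}\{y=0\}$, and from \eqref{eq:5.3} with $y=0$ we get $\widehat{Q_1^t}(x,0)=\sum_z Q_1^t(x,z)=1$. Hence $\widehat g(0)=0$, while $\widehat g(y)=\widehat{Q_1^t}(x,y)$ for $y\neq 0$. Substituting the Parseval expression into the Cauchy--Schwarz bound, the factor $2^{-n}$ cancels the $2^n$ and the zero-frequency term drops, yielding exactly the claimed inequality.

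The argument is almost entirely bookkeeping; the only points requiring care are the normalization convention in Parseval (the factor $2^{-n}$, which is precisely what cancels the $2^n$ coming from Cauchy--Schwarz) and the observation that the $y=0$ term vanishes because both $Q_1^t(x,\cdot)$ and $\pi$ are probability measures. I do not anticipate a genuine obstacle, since this is the standard $L^2$/Fourier upper bound specialized to the uniform stationary distribution on $\Z_2^n$; the substantive work of the paper lies instead in estimating the individual Fourier coefficients $\widehat{Q_1^t}(x,y)$ through the product formula \eqref{eq:5.4}.
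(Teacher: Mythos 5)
Your proof is correct and is exactly the standard Cauchy--Schwarz plus Parseval argument; the paper does not reprove this lemma but cites it from Diaconis--Graham (Lemma 2.3 of the affine-walks paper), where the same computation appears. All the normalizations check out: the factor $2^n$ from Cauchy--Schwarz cancels the $2^{-n}$ in Parseval, and the $y=0$ term vanishes since both measures have total mass one.
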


We will need the following Lemma to prove Theorem \ref{thm:main}(i):

\begin{lem}
  \label{lemma:5.6}
  Let
  \begin{equation} \label{eq:hnk} h(n,k) = \binom{n}{k} \left(1 -
      \frac{k}{n}\right)^{2n-2k} \left( \frac{k}{n} \right)^{2k}
  \end{equation}
  If $2 \leq k \leq n-2$ and $n > 5$, then $h(n,k) \leq 1/n^2$ and $h(n,n-1)\leq 1/n$.
\end{lem}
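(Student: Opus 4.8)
The plan is to exploit a factorization of $h(n,k)$ that peels off a factor bounded by $1$ and leaves a pure entropy term. Writing $p = k/n$, I observe that
\[
  h(n,k) = \Bigl[\binom{n}{k} p^{k}(1-p)^{n-k}\Bigr]\cdot\Bigl[p^{k}(1-p)^{n-k}\Bigr].
\]
The first bracket is a single summand in the expansion $1 = (p + (1-p))^n = \sum_{j=0}^n \binom{n}{j}p^j(1-p)^{n-j}$, and since every summand is nonnegative it is at most $1$. Hence $h(n,k) \le p^k(1-p)^{n-k} = e^{-nH(k/n)}$, where $H(p) = -p\log p - (1-p)\log(1-p)$ is the binary entropy (in nats, consistent with the convention that $\log$ is natural).

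Next I would use the shape of $H$. Since $H$ is concave and symmetric about $p = 1/2$, its minimum over the interval $p \in [2/n,\, 1-2/n]$ is attained at the two endpoints, so $H(k/n) \ge H(2/n)$ for every $2 \le k \le n-2$. This yields the uniform bound $h(n,k) \le e^{-nH(2/n)}$, and the claim $h(n,k) \le 1/n^2$ reduces to the single inequality $nH(2/n) \ge 2\log n$. Expanding,
\[
  nH(2/n) = 2\log\tfrac{n}{2} + (n-2)\log\tfrac{n}{n-2} = 2\log n - 2\log 2 + (n-2)\log\tfrac{n}{n-2},
\]
so it remains to show $(n-2)\log\tfrac{n}{n-2} \ge 2\log 2$.

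The main work is this last inequality. Setting $\phi(n) = (n-2)\log\bigl(1 + \tfrac{2}{n-2}\bigr)$, I would check that $\phi$ is increasing in $n$ — writing $u = 2/(n-2)$, its derivative reduces to $\log(1+u) - u/(1+u) > 0$ — and that at the base case $n=6$ one has $\phi(6) = 4\log(3/2) \approx 1.622 > 2\log 2 \approx 1.386$. Monotonicity then gives $\phi(n) \ge \phi(6) > 2\log 2$ for all $n \ge 6$, completing the first claim. For the remaining value $k = n-1$, direct substitution gives
\[
  h(n,n-1) = n\cdot\Bigl(\tfrac1n\Bigr)^{2}\Bigl(\tfrac{n-1}{n}\Bigr)^{2(n-1)} = \tfrac1n\Bigl(\tfrac{n-1}{n}\Bigr)^{2(n-1)} \le \tfrac1n,
\]
since $\bigl((n-1)/n\bigr)^{2(n-1)} \le 1$. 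The only delicate point is verifying the base-case constant $\phi(6) > 2\log 2$; everything else is the monotonicity of $\phi$ and the elementary fact that a single binomial term is at most $1$.
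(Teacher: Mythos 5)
Your proof is correct, and it takes a genuinely different route from the paper's. The paper works with $K(n,k)=\log\bigl(n^2h(n,k)\bigr)$ expressed via Gamma functions, proves convexity of $K(n,\cdot)$ using the trigamma bound $\psi'(x)<\tfrac1x+\tfrac1{x^2}$ of Guo and Qi, and then checks negativity at the endpoints $k=2$ and $k=n-2$; for $k=n-1$ it invokes a monotonicity-in-$n$ argument for $nh(n,n-1)$. You instead factor
\[
h(n,k)=\Bigl[\tbinom{n}{k}p^k(1-p)^{n-k}\Bigr]\cdot p^k(1-p)^{n-k},\qquad p=k/n,
\]
discard the binomial probability (a single term of $(p+(1-p))^n$, hence at most $1$), and reduce everything to the entropy bound $h(n,k)\le e^{-nH(k/n)}$. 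Concavity and symmetry of $H$ replace the paper's convexity-of-$K$ step, and the whole lemma collapses to the single scalar inequality $(n-2)\log\tfrac{n}{n-2}\ge 2\log 2$, which you verify by an elementary monotonicity argument plus the base case $4\log(3/2)>2\log 2$. Both proofs are ultimately ``reduce to the endpoint $k=2$,'' but yours avoids the polygamma machinery and the external citation, at the cost of throwing away the binomial coefficient --- a loss that, as your computation shows, still leaves enough room ($\tfrac{64}{81n^2}<\tfrac1{n^2}$ at the endpoint). Your treatment of $k=n-1$ by direct substitution, $h(n,n-1)=\tfrac1n\bigl(\tfrac{n-1}{n}\bigr)^{2(n-1)}\le\tfrac1n$, is also simpler and cleaner than the derivative check in the paper.
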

\begin{proof}
  We first prove this for $2 \leq k \leq n-2$.  For
  $x,y \in \mathbb{Z}^+$, define
  \begin{align*}
    K(x,y) & :=\log\frac{\Gamma(x + 1)}{\Gamma(y + 1)\Gamma(x - y + 1)}+(2x-2y)\log\left( 1-\frac{y}{x}\right)\\
           & \quad +2y\log\left( \frac{y}{x}\right)+2\log(x) \,,
  \end{align*}
  where $\Gamma$ is the Gamma function.  We prove that, if
  $2 \leq y \leq x-2$ and $x > 5$, then $K(x,y) < 0$.  Since
  $K(n,k) = \log \left(h(n,k) n^2\right)$, this establishes the lemma.

%
%
%

  Let $\psi(x):=\frac{d\log \Gamma(x)}{dx}$. Then
  \begin{align}
    \frac{\partial^2 K}{\partial y^2} &= -\psi'(y+1)-\psi'(x-y+1)+\frac{2}{x-y}+\frac{2}{y} \nonumber \\
                                      &> -\frac{1}{y+1}-\frac{1}{(y+1)^2}-\frac{1}{(x-y+1)}-\frac{1}{(x-y+1)^2}+\frac{2}{y}+\frac{2}{x-y} \label{eq:5.7} \\
                                      &> 0 \,.\label{eq:5.8}
  \end{align}
  The inequality \eqref{eq:5.7} follows from Guo and Qi \cite[Lemma
  1]{guo2013refinements}, which says that
  $\psi'(x)<\frac{1}{x}+\frac{1}{x^2}$ for all $x>0$.

  The second inequality follows since
  $2/y - (y+1)^{-1} - (y+1)^{-2} > 0$, and we can apply this again
  substituting in $x-y$ for $y$.  Thus $K(x,\cdot)$ is a convex
  function for all $x$. Also
  \begin{align}
    K(x,2)=K(x,x-2)&=\log\left(\frac{x(x-1)}{2} \right)+2(x-2)\log\left(1-\frac{2}{x} \right)+4\log\left(\frac{2}{x}\right)+2\log x \nonumber \\
                   &= \log \left( \frac{8x(x-1)}{x^2}\right)+2(x-2)\log\left( 1-\frac{2}{x}\right) \nonumber \\
                   &< \log (8)-\frac{4(x-2)}{x} \label{eq:5.9} \\
                   &<0  \,,\label{eq:5.10} 
  \end{align}
  for $x > 5$.  The inequality \eqref{eq:5.9} follows from
  $\log(1-u)<-u$. Equations \eqref{eq:5.10} and \eqref{eq:5.8} prove this lemma for $2\leq k\leq n-2$. Finally, $h(n,n-1) \leq 1/n \iff  nh(n,n-1)\leq 1$ which is true because one can verify that $\frac{d}{dn}nh(n,n-1)<0$ for $n\geq 5$ and $nh(n,n-1)<1$ for $n=5$.
\end{proof}
\begin{proof}[Proof of Theorem \ref{thm:main}(i)]
  Let $y=(y_1,y_2,\cdots,y_n)\in \Z_n^2$.  First,
  \begin{align*}
    \widehat{Q_1^{n+1}}(x,y) & =(-1)^{y \cdot A^{n+1}x}\prod_{j=1}^{n+1} \E\left[ (-1)^{y\cdot A^{n-j+2}\epsilon_j}\right]\\
                             & =(-1)^{y \cdot Ix}\prod_{j=1}^{n+1} \E\left[ (-1)^{y \cdot A^{n-j+2}\epsilon_j}\right] \,,
  \end{align*}
  which follows from \eqref{eq:5.4} and Lemma \ref{lemma:5.1}. Note
  that the first factor in this product is
  \[
    \left( \frac{1}{2} + \frac{1}{2n}\left[
        (-1)^{y_1}+(-1)^{y_2}+(-1)^{y_3}+\cdots+(-1)^{y_n}\right]\right)
    \,,
  \]
  which follows from \eqref{eq:5.1} and Lemma \ref{lemma:5.1}. We can
  similarly find other factors in the product, which gives
  \begin{align}
    \widehat{Q_1^{n+1}}& (x,y)  = (-1)^{x \cdot y}\nonumber \\
                       & \quad \times \left( \frac{1}{2} + \frac{1}{2n}\left[ (-1)^{y_1}+(-1)^{y_2}+(-1)^{y_3}+\cdots+(-1)^{y_n}\right]\right) 
                         \label{eq:secondcase} \\
                       & \quad \times \left( \frac{1}{2} + \frac{1}{2n}\left[ (-1)^{y_1}+(-1)^{y_1+y_2}+(-1)^{y_1+y_3}+\cdots+(-1)^{y_1+y_n}\right]\right)
                         \label{eq:fac1}\\
                       & \quad \times \left(\frac{1}{2} + \frac{1}{2n}\left[ (-1)^{y_2}+(-1)^{y_2+y_1}+(-1)^{y_2+y_3}+\cdots+(-1)^{y_2+y_n}\right]\right)  \\
                       &\quad \vdots  \\
                       &\quad \times \left(\frac{1}{2} + \frac{1}{2n}\left[ (-1)^{y_n}+(-1)^{y_n+y_1}+(-1)^{y_n+y_2}+\cdots+(-1)^{y_n+y_{n-1}}\right]\right)\label{eq:5.5}
  \end{align}

  Observe that $\widehat{Q_1^{n+1}}(x,y)=0$ for all $y\in \Z_2^n$ such that $W(y)\in \{1,n\}$ where $W(y)$ is the Hamming weight of $y\in \Z_2^n$. If $W(y)=1$, then one of the factors
  displayed on line \eqref{eq:fac1} through line \eqref{eq:5.5} is
  zero.  If $W(y)=n$, then the factor on line \eqref{eq:secondcase} is zero. If we fix a $2\leq j\leq n-1$ and look at
  all $y\in \Z_2^n$ with $W(y)=j$, then $[\widehat{Q_1^{n+1}}(x,y)]^2$
  is the same for all such $y$, since the expression above is
  invariant over permutation of coordinates, once the first factor is
  squared.

  If $y=(\underbrace{1,1,\ldots,1}_{\text{$k$ ones}},0,\cdots,0)$
  where $2\leq k\leq n-1$, then
  \[
    \widehat{Q_1^{n+1}}(x,y)=(-1)^{\left(\sum_{i=1}^kx_i\right)}\left(1-\frac{k}{n}\right)^{n-k+1}\left(\frac{k-1}{n}\right)^k
    \,.
  \]
  This holds because factors $3$ through $(k+2)$ are equal to
  $\frac{k-1}{n}$ and all factors except the first and second one are
  equal to $\frac{n-k}{n}$. To see this, note that factor $2$ is equal
  to

\begin{align*}
  &\frac{1}{2} + \frac{1}{2n}\left[ (\underbrace{-1-1\ldots-1}_{\text{$k$ negative ones}}) +(\underbrace{1+1\ldots+1}_{\text{$n-k$ positive ones}}) \right] \\
  & = \frac{1}{2} + \frac{1}{2n}\left[ -k + (n-k) \right] 
    = \frac{n-k}{n}
\end{align*}

Factors $3$ through $(k+2)$ are equal to
\begin{align*}
  &\frac{1}{2} + \frac{1}{2n}\left[ -1+ (\underbrace{1+1\ldots+1}_{\text{$k-1$ positive ones}}) + (\underbrace{-1-1\ldots-1}_{\text{$n-k$ negative ones}})\right] \\
  & = \frac{1}{2} + \frac{1}{2n}\left[-1+k-1-(n-k)\right] = \frac{k-1}{n} \,.
\end{align*}

Factors $(k+3)$ through $(n+2)$ are equal to
\begin{align*}
  &\frac{1}{2} + \frac{1}{2n}\left[ 1+ (\underbrace{-1-1\ldots-1}_{\text{$k$ negative ones}}) + (\underbrace{1+1\ldots+1}_{\text{$n-k-1$ positive ones}}) \right] \\
  & = \frac{1}{2} + \frac{1}{2n}\left[1-k+n-k-1\right] = \frac{n-k}{n}
\end{align*}

Thus,
\begin{align}
  \sum_{y\neq 0}\left(\widehat{Q_1^{n+1}}(x,y)\right)^2 &= \sum_{k=2}^{n-1} \binom{n}{k}\left(1-\frac{k}{n}\right)^{2n-2k+2} \left(\frac{k-1}{n}\right)^{2k} \nonumber \\
                                                        &\leq \sum_{k=2}^{n-1} \binom{n}{k}\left(1-\frac{k}{n}\right)^{2n-2k} \left(\frac{k}{n}\right)^{2k} \label{eq:5.6} \,.
\end{align}

We finally analyze the terms in the sum of \eqref{eq:5.6}. Note that
\[
  \binom{n}{k} \left(1-\frac{k}{n}\right)^{2n-2k}
  \left(\frac{k}{n}\right)^{2k}\leq \frac{1}{n^2}
\]
for $2\leq k\leq n-2$ and $n>5$ and $h(n,n-1)\leq 1/n$, by Lemma \ref{lemma:5.6}.
Thus,
%
\begin{equation} \label{eq:finbound} \sum_{k=2}^{n-1}
  \binom{n}{k}\left(1-\frac{k}{n}\right)^{2n-2k}
  \left(\frac{k}{n}\right)^{2k} \leq \frac{n-3}{n^2} + \frac{1}{n} \leq \frac{2}{n}
  \,.
\end{equation}
Lemma \ref{lemma:5.2}, with \eqref{eq:5.6} and the bound
\eqref{eq:finbound}, establishes the upper bound in Theorem
\ref{thm:main}.
\end{proof}

\section{Lower bound of Theorem \ref{thm:main}}
\label{section:4}

Let $\{U_t\}$ be the sequence of coordinates used to update the chain,
and let $\{R_t\}$ be the sequence of random bits used to update.
Thus, at time $t$, coordinate $U_t$ is updated using bit $R_t$.  Both
sequences are i.i.d.  Let $\F_t$ be the $\sigma$-algebra generated by
$(U_1,\ldots,U_t)$ and $(R_1,\ldots,R_t)$.  Let
$X_t = (X_t^{(1)}, \ldots, X_t^{(n)})$ be the chain with transition
matrix $Q$, at time $t$.  The proof is based on the distinguishing
statistic $W_t = \sum_{i=1}^n X^{(i)}_t$, the Hamming weight at time
$t$.

First, observe that

\[
  \P\left(\left.  X_{t+1}^{(n)} = 1 \right| \F_t \right) = \frac{1}{2}
  \,,
\]
because if the state at time $t$ is $x=(x_1,x_2,\cdots,x_n)$, then
$R_{t+1}$ is added at a uniformly chosen coordinate of $x$ and
$X_{t+1}^{(n)}=\sum_{i=1}^n x_i \oplus R_{t+1}\in \{0,1\}$ with
probability $1/2$ each, conditioned on $\F_t$. We now describe a
recursive relation for $W_t$, the Hamming weight of $X_t$:

\begin{equation}\label{eq:4.1}
  W_{t+1} = \sum_{j=2}^n \left( X_{t}^{(j)}\cdot 1_{(U_{t+1}\neq j)} +
    \left( X_{t}^{(j)}\oplus R_{t+1}\right) \cdot 1_{(U_{t+1} = j)}
  \right)+ 1_{\left( X_{t+1}^{(n)}=1\right)} \,.
\end{equation}

The first terms in \eqref{eq:4.1} follows from the fact that for
$1\leq j\leq n-1$,
$$
X_{t+1}^{(j)}=\begin{cases}
  X_t^{(j+1)} \hspace{1cm}\;\;\;\,\text{ if $U_{t+1}\neq j$} \\
  X_t^{(j+1)}\oplus R_{t+1} \;\;\text{ if $U_{t+1}=j$}
\end{cases}
$$

Taking conditional expectation in \eqref{eq:4.1}, we get

\begin{align}
  \E[W_{t+1} \mid \F_t] & = \sum_{j=2}^n\left( X_{t}^{(j)} \left( \frac{n-1}{n}\right) 
                          + \frac{1}{2}\left[1 - X_{t}^{(j)}+X_t^{(j)}\right] \frac{1}{n} \right) + \frac{1}{2} \nonumber \\
                        &= \left(1 - \frac{1}{n}\right)\sum_{j=2}^n X_t^{(j)}
                          + \left(\frac{n-1}{2n}\right) + \frac{1}{2} \nonumber \\
                        &= \left(1 - \frac{1}{n} \right)\sum_{j=1}^n X_t^{(j)}
                          - \left( 1 - \frac{1}{n}\right) X_t^{(1)}+ \left(\frac{2n-1}{2n}\right) \nonumber \\
                        &= \left( 1 - \frac{1}{n}\right) W_t - \left( 1-\frac{1}{n}\right) X_t^{(1)}+\frac{2n-1}{2n} \label{eq:4.2}
\end{align}

Let $\mu_t:=\E(W_t)$. Taking total expectation in \eqref{eq:4.2}, we
get

\begin{equation}\label{eq:4.3}
  \mu_{t+1}=\left( 1 - \frac{1}{n}\right)\mu_t-\left( 1 - \frac{1}{n}\right)\P\left( X_t^{(1)}=1 \right) + \frac{2n-1}{2n}
\end{equation}
We now estimate the probability in \eqref{eq:4.3}. Since $X_0 = 0$, for $t \leq n$,
\begin{equation}\label{eq:4.4}
  \P\left( X_t^{(1)} = 1 \right) = \left[ 1 - \left( 1-\frac{1}{n}\right) ^t \right]\frac{1}{2}  \,.
\end{equation}
To obtain \eqref{eq:4.4}, follow the bit at coordinate $1$ at time $t$
backwards in time: at time $t-1$ it was at coordinate $2$, at time
$t-2$ it was at coordinate $3$, etc.  At time $t$ it is a $0$ unless
it was updated at least once along this progression to the left, and
the last time that it was updated, it was updated to a $1$.
%
The probability it was never updated along its trajectory is
$\left(1-\frac{1}{n}\right)^t$, as we require that coordinates
$2,3,\cdots,t,t+1$ at times $t,t-1,\cdots,2,1$ respectively have not
been chosen for updates.  The probability is thus
$\left[ 1 - \left( 1-\frac{1}{n}\right) ^t \right]$ that at least one
of these coordinates is chosen; the factor of $1/2$ appears because we
need the last update to be to $1$.  Each update is independent of the
chosen coordinate and the previous updates.

We now look at a recursive relation for $\mu_t$.

\begin{equation}\label{eq:4.5}
  \mu_{t+1}=C_1\mu_t-\frac{C_1}{2}\left[ 1-C_1^t\right]+C_2 \,,
\end{equation}
where \eqref{eq:4.5} is obtained by plugging \eqref{eq:4.4} in
\eqref{eq:4.3} and the constants are
$C_1:=\left( 1-\frac{1}{n}\right), C_2:=\left(\frac{2n-1}{2n}\right)$.
Note that $\mu_0 = 0$.  The following Lemma obtains a solution of
\eqref{eq:4.5}.

\begin{lem}\label{lemma:4.1}
  $\mu_t=\left(\frac{t-n}{2}\right)\cdot C_1^{t}+\frac{n}{2}$ is the
  solution of the recursive relation \eqref{eq:4.5}.
\end{lem}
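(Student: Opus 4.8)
The plan is to prove the claimed formula by direct verification. Since \eqref{eq:4.5} is a first-order linear recursion together with the single initial condition $\mu_0 = 0$, its solution is uniquely determined, so it suffices to check that the proposed closed form satisfies both the initial condition and the recursion. First I would check the base case: setting $t=0$ in the candidate gives $\left(\frac{0-n}{2}\right)C_1^0 + \frac{n}{2} = -\frac{n}{2} + \frac{n}{2} = 0$, which matches $\mu_0 = 0$.

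Next, assuming $\mu_t = \left(\frac{t-n}{2}\right)C_1^t + \frac{n}{2}$, I would substitute into the right-hand side of \eqref{eq:4.5} and simplify, grouping the terms by whether they carry a factor of $C_1^{t+1}$. The $C_1^t$-dependent terms combine as $C_1 \cdot \frac{t-n}{2}C_1^t + \frac{C_1}{2}C_1^t = \frac{t-n+1}{2}C_1^{t+1}$, which is exactly the $C_1^{t+1}$-coefficient required for $\mu_{t+1}$. The remaining constant terms are $\frac{n}{2}C_1 - \frac{C_1}{2} + C_2 = \frac{(n-1)C_1}{2} + C_2$; substituting $C_1 = (n-1)/n$ and $C_2 = (2n-1)/(2n)$ and using the identity $(n-1)^2 + (2n-1) = n^2$ shows this equals $\frac{n}{2}$. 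Hence the right-hand side equals $\left(\frac{(t+1)-n}{2}\right)C_1^{t+1} + \frac{n}{2}$, which is precisely the formula at time $t+1$, completing the induction.

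There is no genuine obstacle here; the only point requiring care is the algebraic cancellation in the constant term, where the identity $(n-1)^2 + (2n-1) = n^2$ does all the work. An alternative derivation, which explains where the formula comes from rather than merely verifying it, is to solve \eqref{eq:4.5} directly. Rewriting it as $\mu_{t+1} - C_1\mu_t = \tfrac12 + \tfrac12 C_1^{t+1}$ (using $C_2 - \tfrac{C_1}{2} = \tfrac12$), a constant particular solution $\frac{n}{2}$ handles the constant forcing, while the forcing term $\tfrac12 C_1^{t+1}$ is \emph{resonant}, since $C_1^t$ already solves the homogeneous equation, and therefore forces a particular solution of the form $\frac{t}{2}C_1^t$. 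Imposing $\mu_0 = 0$ then fixes the coefficient of the homogeneous solution $C_1^t$ to be $-\frac{n}{2}$ and recovers the stated closed form.
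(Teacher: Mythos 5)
Your proof is correct and follows the same route as the paper, which simply notes that $\mu_0=0$ and leaves the verification of the recursion to the reader; you carry out that verification explicitly (the key cancellation $(n-1)^2+(2n-1)=n^2$ checks out), and your uniqueness remark and alternative derivation are sound.
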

\begin{proof}
  Clearly $\mu_0 = 0$, and the reader can check that $\mu_t$ obeys the
  recursion.
%


\end{proof}

Note that we can write
$W_t=g_t(R_1,R_2,\ldots R_t,U_1,U_2,\ldots U_t)$ for some function
$g_t$, and that the variables $R_1, \ldots, R_t, U_1, \ldots, U_t$ are
independent.  The following Lemma is used in proving that $W_t$ has
variance of order $n$.

\begin{lem}
  \label{lemma:4.2}
$$\max_{\substack{r_1,\ldots,r_t\\u_1,\ldots, u_t}} \Big|g_t(r_1,\ldots,r_i,\ldots,r_t,u_1,\ldots, u_t)-g_t(r_1,\ldots,r_i\oplus 1,\ldots,r_t,u_1,\ldots u_t)\Big|\leq 2 \,.$$

for $1\leq i\leq t$ and $1\leq t\leq n$.
\end{lem}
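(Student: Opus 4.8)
The plan is to show that flipping a single input bit $r_i$ perturbs the entire state vector by a vector of Hamming weight at most $2$, so that the two Hamming weights $W_t$ differ by at most $2$. I would start from the explicit solution \eqref{eq:5.2}, writing $X_t=\bigoplus_{j=1}^{t}A^{t-j+1}\epsilon_j\oplus A^tX_0$ with $\epsilon_j=r_je_{u_j}$ (so $\epsilon_j=\0$ when $r_j=0$ and $\epsilon_j=e_{u_j}$ when $r_j=1$). Replacing $r_i$ by $r_i\oplus 1$ changes only the $j=i$ summand, and the two resulting states $X_t,\widetilde X_t$ differ, as vectors over $\Z_2^n$, by exactly $A^{t-i+1}e_{u_i}$. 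Since $|W_t-\widetilde W_t|=\bigl|\sum_{k}(X_t^{(k)}-\widetilde X_t^{(k)})\bigr|$ is bounded by the number of coordinates in which the two states disagree, namely the Hamming weight of $A^{t-i+1}e_{u_i}$, it suffices to prove that $A^{s}e_{u_i}$ has Hamming weight at most $2$ for every $s\ge 1$ (here $s=t-i+1\ge 1$ since $i\le t$).

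The heart of the argument is therefore the purely linear-algebraic claim that the set of vectors in $\Z_2^n$ of Hamming weight at most $2$ is invariant under $A$. I would prove this by reading the action of $A$ off \eqref{eq:Adefn} coordinatewise: for $1\le k\le n-1$ one has $(Av)_k=v_{k+1}$, a plain left shift, while $(Av)_n=\bigoplus_{k=1}^{n}v_k$ is the parity of $v$. The shift alone sends a weight-$\le 2$ vector to a vector with at most two nonzero entries among coordinates $1,\dots,n-1$, so the only way to manufacture a third nonzero entry is through the feedback coordinate $(Av)_n$. But that coordinate equals the parity of the weight of $v$, which vanishes precisely when the weight is $2$. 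Hence if $v$ has weight $2$ no feedback bit is produced and $A v$ again has weight at most $2$; if $v$ has weight $1$ the shift contributes at most one entry and the feedback at most one more, so $Av$ has weight at most $2$; and the case of weight $0$ is trivial. A short case check—separating according to whether a nonzero entry sits at coordinate $1$, which the shift discards—makes this precise.

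Starting from $e_{u_i}$, which has weight $1$, a one-line induction on $s$ then gives that $A^{s}e_{u_i}$ has Hamming weight at most $2$ for all $s\ge 0$, and combining this with the reduction above completes the proof. I expect the one point genuinely worth spelling out to be the verification that the feedback and the shifted entries never overlap or combine to exceed weight $2$: concretely, that the feedback coordinate $n$ is never the image of the shift (it is not, since $(Av)_k=v_{k+1}$ only for $k\le n-1$), and that the parity cancellation at weight $2$ is exact. This elementary cancellation is what pins the Lipschitz constant at $2$ rather than letting it grow with $t$, so I would present it as the main step and treat the bookkeeping around \eqref{eq:5.2} as routine.
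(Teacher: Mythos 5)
Your proof is correct, but it takes a genuinely different route from the paper's. The paper argues directly on the two configuration trajectories: it runs an induction on the time $s$, doing a case analysis (disagreement set of size $0$, $1$, or $2$; update coordinate inside or outside that set) to show the Hamming distance between the two trajectories never exceeds $2$. You instead exploit the $\Z_2$-linearity of the dynamics via \eqref{eq:5.2}: flipping $r_i$ perturbs the state by exactly $A^{t-i+1}e_{u_i}$, so everything reduces to the clean linear-algebraic fact that the set of vectors of Hamming weight at most $2$ is $A$-invariant (the shift contributes at most the old nonzero coordinates moved left, and the feedback bit at coordinate $n$ is the parity, which vanishes exactly when the weight is $2$). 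Your reduction step $|W_t-\widetilde W_t|\le \|X_t\oplus \widetilde X_t\|_{\mathrm{Hamming}}$ and the invariance argument are both sound, and the hypothesis $t\le n$ is not even needed in your version. What your approach buys is economy and uniformity: because the random updates cancel in the difference vector, you never have to track the two trajectories jointly, and the same invariance statement applied to $e_{u_i}\oplus e_{u_i'}$ (weight $\le 2$) immediately yields Lemma \ref{lemma:4.3} as well, whereas the paper proves that lemma by a second, separate case analysis. The paper's hands-on induction, by contrast, requires no appeal to the matrix representation and makes visible exactly where the disagreements sit, but at the cost of considerably more bookkeeping.
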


\begin{proof}
  Any sequence of coordinates $\{u_s\}_{s=1}^t$ in
  $\{1,\ldots,n\}$ and bits
  $\{r_s\}_{s=1}^t$ in $\{0,1\}$ determine inductively a sequence $\{x_s\}_{s=0}^t$ in
  $\{0,1\}^n$ by updating at time $s$ the configuration $x_s$
  by adding the bit $r_s$ at coordinate $u_s$ followed by an application of
  the transformation $f$.   We call a sequence of
  pairs $\{(u_s,r_s)\}_{s=1}^t$ a \emph{driving sequence}, and the
  resulting walk in the hypercube $\{x_s\}_{s=0}^t$ the
  \emph{configuration sequence}.
  We write
  \[
    g_t = g_t(r_1,\ldots,r_t, u_1,\ldots, u_t), \qquad
    g_t' = g_t(r_1',\ldots,r_t', u'_1,\ldots,u_t')
  \]
  
  Consider a specific driving sequence of locations and update bits,
  $\{(r_s,u_s)\}_{s=1}^t$, and a second such
  \emph{driving sequence} $\{(r_s',u_s')\}_{s=1}^t$ which satisify
\begin{itemize}
\item $u_s'=u_s$ for $1 \leq s \leq t$,
\item $r_s' = r_s$ for $1 \leq s \leq t$ and $s \neq s_0$,
\item $r_{s_0}' = r_{s_0} \oplus 1$.
\end{itemize}
Thus, the two driving sequences agree everywhere except for at time $s_0$, where the update bits differ.

We want to show that $|g_t - g_t'| \leq 2$, for any $t \leq n$.

Let $\{x_s\}_{1 \leq s \leq t}$ and
$\{y_s\}_{1 \leq s \leq t}$ be the two configuration sequences
in $\{0,1\}^n$ obtained, respectively,
from the two driving sequences.  We will show inductively that the Hamming distance 
  \[
    d_s := d_s(x_s,y_s) := \sum_{j=1}^n |x_s^{(j)} -
    y_s^{(j)}|
  \]
  satisfies $d_s \leq 2$ for $s \leq t$, and hence the
  maximum weight difference $|g_t - g_t'|$ is bounded by $2$.
  
  Clearly $x_s = y_s$ for $s < s_0$, since the two
  driving sequences agree prior to time $s_0$, whence
  $d_s = 0$ for $s < s_0$.

  We now consider $d_{s_0}$.  Let
  $\ell = u_{s_0} = u'_{s_0}$ be the coordinate
  updated in both $x_{s_0}$ and $y_{s_0}$, and
  as before let
  \begin{align*}
      x_{s_0-1}' & =
      (x_{s_0-1}^{(1)}, \ldots, x_{s_0-1}^{(\ell)}\oplus r_{s_0}, \ldots, x_{s_0-1}^{(n)}) \,,\\
      y_{s_0-1}' & =
      (y_{s_0-1}^{(1)}, \ldots, y_{s_0-1}^{(\ell)}\oplus r'_{s_0}, \ldots, y_{s_0-1}^{(n)}) \,.
  \end{align*}
  Since $r_{s_0} \neq r'_{s_0}$ but
  $x_{s_0-1} = y_{s_0-1}$, the configurations
  $x_{s_0-1}'$ and $y_{s_0-1}'$ have different
  parities.   Recalling that 
  $x_{s_0} = f(x_{s_0-1}')$ and $y_{s_0} = f(y_{s_0-1}')$, we consequently have that
  $x_{s_0}^{(n)} \neq y_{s_0}^{(n)}$.
  Since $x_{s_0}$ and $y_{s_0}$ agree at all
  other coordinates except at $\ell-1$,
  we have
  \[
  d_{s_0} \leq I\{ \ell \neq 1\} + 1 \leq 2 \,.
  \]

  Next suppose that $d_s = 1$ for some time $s \geq s_0$, so that for some
  $\ell \in \{1,\ldots,n\}$, we have 
  $x_s^{(j)} = y_s^{(j)}$ for $j \neq \ell$ and
  $x_s^{(\ell)} \neq y_s^{(\ell)}$.  Since $r_{s+1} = r'_{s+1}$
  and $u_{s+1} = u'_{s+1}$,  after adding the same update bit
  at the same coordinate in the configurations $x_s$ and $y_s$,
  but before applying $f$,
  the resulting configurations will still have a single disagreement
  at $\ell$.  Thus, after applying $f$ to obtain the configurations at
  time $s+1$, we have
  $x_{s+1}^{(n)} \neq y_{s+1}^{(n)}$, but
  \[
    d_{s+1} = \sum_{j=1}^{n-1}|x_{s+1}^{(j)} - y_{s+1}^{(j)}|
    + |x_{s+1}^{(n)} - y_{s+1}^{(n)}| 
    = \sum_{j=2}^n |x_{s}^{(j)} - y_{s}^{(j)}| + 1
    \leq d_s + 1 \leq 2 \,.
  \]
  (If $\ell = 1$, then $d_{s+1} = 1$.)   Thus, $d_{s+1} \leq 2$.

  Finally consider the case that $d_s = 2$
  for $s \geq s_0$. Again, $u_{s+1} = u'_{s+1}$ and $r_{s+1} =
  r'_{s+1}$; After updating $x_s$ and $y_s$ with the same
  bit at the same coordinate, but before
  applying $f$, the two configurations still differ at
  exactly these two coordinates.
  Thus, $x_{s+1}^{(n)} = y_{s+1}^{(n)}$, and
  \[
    d_{s+1} = \sum_{j=1}^{n-1}|x_{s+1}^{(j)} - y_{s+1}^{(j)}|
    + 0 = \sum_{j=2}^{n-1} |x_s^{(j)} - y_s^{(j)}|
    \leq d_s \leq 2 \,.
  \]
  (Again, the sum is $1$ if one of the two disagreements at
  time $s$ is at coordinate $1$.)

  We now have that $d_s \leq 2$ for all $s \leq t$:  For $s \leq s_0$,
  we have $d_s = 0$, and $d_{s_0} = 1$.   For $s \geq s_0$, if
  $d_{s} \leq 2$, then $d_{s+1} \leq 2$.
  It then follows in particular that $d_t \leq 2$ and that
  $|g_t - g_t'| \leq 2$.
\end{proof}
\begin{lem}
  \label{lemma:4.3}
$$\max_{\substack{r_1,\ldots,r_t\\
    u_1,\ldots,u_t, u_i'}}
\Big|g_t(r_1,\ldots,r_t,u_1,\ldots,u_i,\ldots,
u_t)-g_t(r_1,\ldots,r_t,u_1,\ldots,u_i',\ldots, u_t)\Big|\leq 2 \,.$$
\end{lem}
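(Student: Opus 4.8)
The plan is to reuse, almost verbatim, the coupling argument from the proof of Lemma \ref{lemma:4.2}, tracking the Hamming distance between two configuration sequences whose driving sequences now differ in a single update \emph{coordinate} rather than a single update \emph{bit}. Let $i$ play the role of $s_0$: fix driving sequences $\{(r_s,u_s)\}_{s=1}^t$ and $\{(r_s',u_s')\}_{s=1}^t$ with $r_s' = r_s$ for all $s$, $u_s' = u_s$ for $s \neq i$, and $u_i' \neq u_i$. Let $\{x_s\}$ and $\{y_s\}$ be the resulting configuration sequences and set $d_s := \sum_{j=1}^n |x_s^{(j)} - y_s^{(j)}|$. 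Since the driving sequences agree before time $i$, we have $d_s = 0$ for $s < i$, and the goal is to prove $d_s \leq 2$ for all $s \leq t$, whence $|g_t - g_t'| \leq d_t \leq 2$.

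The only step that genuinely differs from Lemma \ref{lemma:4.2} is the analysis at time $i$. Write $z := x_{i-1} = y_{i-1}$ for the common configuration. If $r_i = 0$, adding the bit changes nothing in either sequence, the pre-$f$ configurations coincide, and $d_i = 0$. If $r_i = 1$, the $x$-update flips coordinate $u_i$ of $z$ while the $y$-update flips coordinate $u_i'$, so the two pre-$f$ configurations disagree at exactly the two coordinates $u_i$ and $u_i'$. The crucial contrast with Lemma \ref{lemma:4.2} is that flipping two distinct coordinates \emph{preserves} overall parity, whereas flipping a single bit there altered the parity; consequently the two pre-$f$ configurations have the same parity and, applying $f$, we get $x_i^{(n)} = y_i^{(n)}$.

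It then remains to track how $f$ moves the two disagreements. Since $f$ shifts coordinates $2,\ldots,n$ down into positions $1,\ldots,n-1$ and records the parity in the last coordinate, a disagreement at a coordinate $j \geq 2$ reappears at $j-1$, while a disagreement at coordinate $1$ is shifted out. Thus if both $u_i, u_i' \geq 2$ we obtain $d_i = 2$, and if one of them equals $1$ we obtain $d_i = 1$; in every case $d_i \leq 2$. For $s > i$ the two driving sequences coincide, so the inductive propagation of $d_s$ under a common update followed by $f$ is identical to the one carried out in Lemma \ref{lemma:4.2}: from $d_s \leq 2$ one deduces $d_{s+1} \leq 2$ (a single pre-$f$ disagreement flips the parity and reappears, giving at most $2$; a double pre-$f$ disagreement preserves parity and shifts to at most $2$). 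Hence $d_t \leq 2$.

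I expect the work to be bookkeeping rather than conceptual: the one point requiring care is recognizing that altering a coordinate produces a \emph{parity-preserving} double disagreement before $f$ is applied (the exact reverse of the single, parity-flipping disagreement in Lemma \ref{lemma:4.2}), and then checking the short sub-cases on whether $u_i$ or $u_i'$ equals $1$ to confirm $d_i \leq 2$. Once $d_i \leq 2$ is established, the remainder of the induction is word-for-word the tail of the proof of Lemma \ref{lemma:4.2} and can simply be invoked.
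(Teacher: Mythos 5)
Your proposal is correct and follows essentially the same route as the paper: the paper's proof of Lemma \ref{lemma:4.3} likewise observes that when $r_i=1$ the two pre-$f$ configurations differ at exactly the two coordinates $u_i$ and $u_i'$, so the appended parity bit agrees and $d_i\leq 2$, and then propagates the bound $d_s\leq 2$ forward by the same one-versus-two-disagreement induction (the paper merely spells out the bit-value subcases more explicitly). No gap.
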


\begin{proof}
  Again, if two trajectories differ only in the coordinate selected at
  time $i$, then the weight at time $t$ can by differ by at most
  $2$. Fix time $1\leq t\leq n$ and consider the dynamics of number of
  coordinates at which the two trajectories differ at time $k<t$.

  The two trajectories agree with each other until time $i$ because
  the same random bits and locations are used to define these
  trajectories.  At time $i$, we add the same random bit $r_i$ to
  update both trajectories, but use coordinate $u_i$ for the first
  trajectory and coordinate $u_i'$ in the second trajectory. If
  $r_i=0$, then clearly the two trajectories continue to agree at time
  $k\geq i$.

  Now suppose that $r_i=1$. Let $b_1,b_2$ be the bits at coordinates
  $u_i,u_i'$ in the \emph{first} trajectory at time $i-1$ and
  $b_3,b_4$ be the bits at coordinates $u_i,u_i'$ in the \emph{second}
  trajectory at time $i-1$.  Note that since the trajectories are
  identical for times less than $i$, $b_1 = b_3$ and $b_2=b_4$.  For
  all values of $(b_1,b_2,b_3,b_4)$ satisfying $b_1 = b_3$ and
  $b_2 = b_4$, there are two disagreements between the trajectories at
  coordinates $u_i-1,u_i'-1$ at time $i$.  (If $u_i-1 < 0$ or
  $u_i' - 1 < 0$, then there is a single disagreement).  The appended
  bit agrees, since
  \[
    (b_1\oplus 1) \oplus b_2 = b_1 \oplus (b_2 \oplus 1) = b_3 \oplus
    (b_4 \oplus 1) \,.
  \]
  This takes care of what happens at time $i$ when the single
  disagreement between update coordinates occurs; at time $i$ the
  Hamming distance is bounded by $2$.

  Now we consider an induction on the Hamming distance, showing that
  at all times the Hamming distance is bounded by two.

  {\em Case A}.  Suppose that the trajectories differ at two
  coordinates, say $\ell_1,\ell_2$ at time $k>i$.  Since the two
  trajectories only differ in the updated coordinate at time $i$ with
  $i < k$, the chosen update coordinate and the chosen update bit are
  the same for both trajectories at time $k$.  Let $b_1,b_2$ be the
  bits at coordinates $\ell_1,\ell_2$ in the \emph{first} trajectory
  at time $k$ and $b_3,b_4$ be the bits at coordinates $\ell_1,\ell_2$
  in the \emph{second} trajectory at time $k$. Necessarily
  $b_1 \neq b_3$ and $b_2 \neq b_4$.  Consider the following subcases:

  {\em Subcase 1.} $(b_1,b_2,b_3,b_4)=(0,1,1,0)$ or
  $(b_1,b_2,b_3,b_4)=(1,0,0,1)$. If $u_k\notin\{\ell_1,\ell_2\}$, then
  the trajectories continue to have the same two disagreements at
  these coordinates shifted by one at time $k+1$, since the updated
  coordinate and the update bit is the same for both trajectories.
  Also, the new bit which is appended agrees, since
  $b_1 \oplus b_2 = 1 = b_3 \oplus b_4$, and all other bits in the
  mod-$2$ sum agree. So the Hamming distance remains bounded by two,
  allowing the possibility the Hamming distance decreases if
  $\ell_1 \wedge \ell_2 = 1$.

  Supposing that $u_k \in \{\ell_1,\ell_2\}$, without loss of
  generality, assume that $u_k = \ell_1$.  These disagreements
  propagate to time $k+1$, allowing for the possibility for one to be
  eliminated if it occurs at coordinate $1$. For $r_k = 0$, the
  appended bit will agree since
  \[
    (b_1\oplus 0) \oplus b_2 = 1 = (b_3 \oplus 0) \oplus b_4
  \]
  and all other bits in the mod-$2$ sum agree.  For $r_k = 1$, the
  appended bit will still agree since
  \[
    (b_1 \oplus 1) \oplus b_2 = 1 = (b_3 \oplus 1) \oplus b_4 \,.
  \]
  This means at time $k+1$, the Hamming distance is bounded by $2$.


  {\em Subcase 2.} $(b_1,b_2,b_3,b_4)=(1,1,0,0)$ or
  $(b_1,b_2,b_3,b_4)=(0,0,1,1)$. If $u_k\notin\{\ell_1,\ell_2\}$, then
  the trajectories continue to have the same two disagreements (unless
  one of the disagreements is at coordinate $1$), the appended bit
  agrees (since $1\oplus 1 = 0 \oplus 0$), and the Hamming distance
  remains bounded by $2$.

  Suppose that $u_k = \ell_1$.  If $r_k = 0$, then the two
  disagreements persist (or one is eliminated because it occurred at
  coordinate $1$) and the appended bit agrees.  If $r_k=1$, then the
  two disagreements persist, and again the appended bit agrees,
  because now $0 \oplus 1 = 1 \oplus 0$,

  Therefore the Hamming distance remains bounded by $2$ at time $k+1$.


  {\em Case B}.  Suppose that the trajectories differ at one
  coordinate, say $\ell$, at time $k>i$. Consider the following
  subcases:

  {\em Subcase 1.} $u_k\neq \ell$. The disagreement persists unless
  $u_k = 1$, and the appended bit now disagrees.  Thus the Hamming
  distance is bounded by $2$ at time $k+1$.

  {\em Subcase 2.} $u_k=\ell$.  The disagreement persists at $u_k$
  (unless $u_k = 1$), and the appended bit now disagrees. Again, the
  Hamming distance is bounded by $2$ at time $k+1$.

  Thus, by induction, the Hamming distance between the two
  trajectories remains always bounded by $2$.  As a consequence, the
  difference in the Hamming {\em weight} remains never more than $2$.
\end{proof}

\begin{lem} \label{lem:BDI} If $W_0=0$, then $\Var(W_t) \leq 4t$.
\end{lem}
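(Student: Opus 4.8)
The plan is to apply the Efron--Stein inequality to the representation $W_t = g_t(R_1,\ldots,R_t,U_1,\ldots,U_t)$, exploiting that the $2t$ driving variables $R_1,\ldots,R_t,U_1,\ldots,U_t$ are independent and that Lemmas \ref{lemma:4.2} and \ref{lemma:4.3} supply a uniform bounded-difference constant of $2$ for each of them. Here the hypothesis $W_0 = 0$ merely fixes the initial configuration $X_0 = \mathbf{0}$, so that $W_t$ is a genuine deterministic function of the driving sequence rather than of the starting state.

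I would first recall the Efron--Stein inequality in resampling form: if $Z_1,\ldots,Z_m$ are independent, $Z_1',\ldots,Z_m'$ are independent copies, and $W^{(i)}$ denotes the value of $g(Z_1,\ldots,Z_m)$ with $Z_i$ replaced by $Z_i'$, then
\[
  \Var\bigl(g(Z_1,\ldots,Z_m)\bigr) \leq \frac{1}{2}\sum_{i=1}^m \E\bigl[(W - W^{(i)})^2\bigr] \,.
\]
The intended application takes $m = 2t$ with $(Z_1,\ldots,Z_m) = (R_1,\ldots,R_t,U_1,\ldots,U_t)$, so the only task is to bound each of the $2t$ resampling terms by $4$.

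For a term that resamples some $R_i$, the independent copy $R_i'$ either equals $R_i$, in which case the difference vanishes, or equals $R_i \oplus 1$, in which case Lemma \ref{lemma:4.2} gives $|W_t - W_t^{(i)}| \leq 2$; either way $\E[(W_t - W_t^{(i)})^2] \leq 4$. For a term that resamples some $U_i$, Lemma \ref{lemma:4.3} directly yields $|W_t - W_t^{(i)}| \leq 2$, so again $\E[(W_t - W_t^{(i)})^2] \leq 4$. Summing the $2t$ contributions,
\[
  \Var(W_t) \leq \frac{1}{2}\cdot(2t)\cdot 4 = 4t \,.
\]

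Because the two bounded-differences lemmas have already carried out the combinatorial work of tracking the Hamming distance between perturbed configuration sequences, no real obstacle remains for this statement; the one point deserving a sentence of care is to confirm that \emph{resampling} an individual variable, as opposed to merely flipping a bit or swapping a coordinate, is covered by those lemmas. It is, since resampling either leaves the variable unchanged (contributing $0$) or produces exactly the single-coordinate perturbation the lemmas analyze (contributing at most $2$ in absolute value).
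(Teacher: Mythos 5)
Your proof is correct and follows essentially the same route as the paper: both invoke the Efron--Stein/bounded-differences inequality for the function $g_t$ of the $2t$ independent driving variables, with Lemmas \ref{lemma:4.2} and \ref{lemma:4.3} supplying the per-coordinate constant $2$, yielding $\Var(W_t)\leq \tfrac12\cdot 2t\cdot 4=4t$. Your explicit remark that resampling $R_i$ either leaves it fixed or flips it (so Lemma \ref{lemma:4.2} applies) is a small point of care the paper glosses over, but the substance is identical.
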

\begin{proof}
  We use the following consequence of the Efron-Stein inequality:
  Suppose that $g:{\mathcal X}^n \to \R$ has the property that for
  constants $c_1,\ldots,c_n > 0$,
  \[
    \sup_{x_1,\ldots,x_n, x_i'} |g(x_1, \ldots, x_n) -
    g(x_1,\ldots,x_{i-1},x_i',x_{i+1},\ldots, x_n)| \leq c_i \,.
  \]
  and if $X_1,\ldots,X_n$ are independent variables, and
  $Z=g(X_1,\ldots,X_n)$ is square-integrable, then
  $\Var(Z) \leq 4^{-1}\sum_i c_i^2$.  (See, for example,
  \textcite[Corollary 3.2]{CI}.)

  This inequality together with Lemmas \ref{lemma:4.2} and
  \ref{lemma:4.3} show that
  \begin{equation}
    \Var(W_t)\leq \frac{1}{2}\sum_{i=1}^{2t} 2^2 = 4t \leq 4n \quad\text{for $t\leq n$}
  \end{equation}
\end{proof}

\begin{proof}[Proof of Theorem \ref{thm:main}(ii)]
  Plugging $t=n-n^\alpha$ in Lemma \ref{lemma:4.1}, where
  $\frac{1}{2}<\alpha<1$, we get
  \begin{equation}
    \E W_t=\mu_t=\frac{n}{2}-\left( 1-\frac{1}{n}\right)^{n-n^{\alpha}}\frac{n^\alpha}{2}\leq \frac{n}{2}-\frac{1}{2e}n^\alpha \label{eq:4.18}
  \end{equation}

  For any real-valued function $h$ on $S$ and probability $\mu$ on
  $S$, write $E_{\mu}h:=\sum_{x\in S}h(x)\mu (x)$.  Similarly,
  $\Var_\mu(h)$ is the variance of $h$ with respect to $\mu$.  As
  stated earlier, $W(x)$ is the Hamming weight of $x\in S$. The
  distribution of the random variable $W$ under the stationary
  distribution $\pi$ (uniform on $\{0,1\}^n$), is binomial with
  parameters $n$ and $1/2$, whence
  \begin{equation} \label{eq:statmom} E_\pi(W)=\frac{n}{2}, \quad
    \Var_{\pi}(W)=\frac{n}{4} \,.
  \end{equation}

  Let $c>0$ be a constant and
  $A_c:=\left(\frac{n}{2}-c\sqrt{n},\infty\right)$.  Chebyshev's
  inequality yields that
  \[
    \pi\{ W \in A_c\} \geq 1 - \frac{1}{4 c^2} \,.
  \]
  Thus we can pick $c$ so that this is at least $1 - \eta$ for any $\eta>0$.


  Fix $\frac{1}{2}< \alpha< 1$.  For $t_n=n-n^\alpha$, by
  \eqref{eq:4.18},
  \begin{align*}
    \P_0\left(W_{t_n}\in A_c \right) & =
                                       \P_0\left( W_{t_n} > \frac{n}{2} - c\sqrt{n}\right) \\
                                     & \leq  \P_0\left(W_{t_n}-\E W_{t_n}\geq \frac{n^\alpha}{2e}-c\sqrt{n}\right) \,.
  \end{align*}
  Since
  \[
    \frac{n^\alpha}{2e} - c n^{1/2} = n^{1/2}\underbrace{\left(
        \frac{n^{\alpha-1/2}}{2e} - c \right)}_{\delta_n(c)}\,,
  \]
  we have again by Chebyshev's inequality,
  \[
    \P_0(W_{t_n} \in A_c) \leq \frac{ \Var(W_{t_n})}{n\delta_n(c)^2}
    \leq \frac{4 t_n}{n\delta_n(c)^2} \leq \frac{4}{\delta_n(c)^2} \,.
  \]
  The last inequality follows from Lemma \ref{lem:BDI}, since
  $t_n \leq n$.
%

  Finally,
  \[
    \|Q_1^{t_n}(\0,\cdot)-\pi\|_{\text{TV}}\geq \big|\pi(W\in A_c)-
    \P_0\left(W_{t_n}\in A_c \right)\big| \geq 1 - \frac{1}{4c^2} -
    \frac{4}{\delta_n(c)^2} \,.
  \]
  One can take, for example, $c_n = \log n$ so that
  $\delta_n(c_n) \to \infty$, in which case the bound above is
  $1 - o(1)$.

\end{proof}

\section{A related chain}
\label{section:6}

We now consider a Markov chain $Q_2$ on $\{0,1\}^{2m}$ related to the
the chain $Q_1$.  One step of $Q_2$ chain again consists of combining a
stochastic move with $f$.  Instead of updating a random coordinate,
now the coordinate is always the ``middle'' coordinate.
Thus, when at $x$, first the random move
\[
  x \mapsto x' = (x_1,x_2.\cdots,x_{m}\oplus R,x_{m+1},\cdots,x_{2m})
\]
where $R$ is a an independent random bit.  Afterwards, again the
transformation $f$ is applied to yield the new state $f(x')$.

\begin{theorem}\label{t2}
  For all $x$, $\|Q_2^{(n)}(x,\cdot)-\pi\|_{{\rm TV}}= 0$ for all $n=2m$ where $m\geq 1$.
\end{theorem}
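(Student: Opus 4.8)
The plan is to reduce this exact-mixing statement to a purely linear-algebraic fact about the companion matrix $A$, and then settle that fact with a short $\gcd$ computation over $\Z_2$. Exactly as in Section~\ref{section:5}, one step of $Q_2$ is $X_t = A(X_{t-1}\oplus \epsilon_t)$, but now the stochastic move only touches coordinate $m$, so $\epsilon_t = R_t e_m$ with $R_t$ an independent fair bit. Formula \eqref{eq:5.2} then gives
\[
  X_n = A^n x \,\oplus\, \sum_{k=1}^{n} R_{n-k+1}\, A^k e_m \,.
\]
Since the $R_j$ are i.i.d.\ uniform on $\{0,1\}$, the random sum is distributed as $Ms$, where $s$ is uniform on $\Z_2^n$ and $M$ has columns $A e_m,\dots,A^n e_m$. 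This sum is uniform on all of $\Z_2^n$ --- equivalently, $X_n$ is uniform for \emph{every} starting point $x$, which is exactly the claim --- if and only if those columns are linearly independent. As $A$ is invertible, this is equivalent to $\{A^k e_m : 0\le k\le n-1\}$ being a basis, i.e.\ to $e_m$ being a \emph{cyclic vector} for $A$.

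Next I would exploit that $A$ is the companion matrix of $p(x)=1+x+\cdots+x^n$, whose characteristic and minimal polynomials both equal $p$; note $(x+1)p(x)=x^{n+1}+1$, consistent with Lemma~\ref{lemma:5.1}. A short computation as in the proof of Lemma~\ref{lemma:5.1} shows $e_1$ is cyclic and inverts the relations there to $e_j=\sum_{i=1}^{n-j+1}A^i e_1$. Under the identification $\Z_2^n\cong \Z_2[x]/(p)$ sending $A^i e_1\mapsto x^i$, the case $n=2m$ gives $e_m=\sum_{i=1}^{m+1}A^i e_1$, corresponding to $g(x)=x+x^2+\cdots+x^{m+1}=x(1+x+\cdots+x^m)$. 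In the cyclic module $\Z_2[x]/(p)$ the annihilator of the class of $g$ is $\bigl(p/\gcd(g,p)\bigr)$, so $e_m$ is cyclic if and only if $\gcd(g,p)=1$.

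Finally the $\gcd$ computation. Since $p(0)=1$ we have $\gcd(x,p)=1$, hence $\gcd(g,p)=\gcd(1+x+\cdots+x^m,\,p)$. Writing $1+x+\cdots+x^m=(x^{m+1}+1)/(x+1)$ and $p=(x^{2m+1}+1)/(x+1)$, and using the standard identity $\gcd(x^a+1,x^b+1)=x^{\gcd(a,b)}+1$ over $\Z_2$ together with $\gcd(m+1,2m+1)=1$, one gets $\gcd(x^{m+1}+1,x^{2m+1}+1)=x+1$; cancelling the common factor $x+1$ yields $\gcd(1+x+\cdots+x^m,p)=1$. Thus $e_m$ is cyclic, the random sum above is uniform on $\Z_2^n$, and $X_n$ is exactly uniform after $n=2m$ steps from every $x$.

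I expect the main obstacle to be the middle step: correctly pinning down the polynomial $g$ attached to $e_m$ under the companion-matrix module structure, and cleanly justifying the ``uniform $\iff$ basis'' reduction. The concluding $\gcd$ is then immediate and, pleasantly, works for every $m$ precisely because $2(m+1)-(2m+1)=1$.
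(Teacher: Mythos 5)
Your proof is correct, but it takes a genuinely different route from the paper's. The paper argues by explicitly tracking how each update bit $R_j$ propagates through the coordinates (as in Table~\ref{table:1}), writing the time-$n$ state as $Z = BR + \vec{x}$ for an explicit block matrix $B$, and then checking $\det(B)=1$ by a Schur-complement computation; uniformity of $Z$ follows because $B$ is invertible. You instead stay entirely within the algebraic framework of Section~\ref{section:5}: from \eqref{eq:5.2} with $\epsilon_t = R_t e_m$ you reduce exact uniformity at time $n$ to the linear independence of $Ae_m,\dots,A^n e_m$, i.e.\ to $e_m$ being a cyclic vector for the companion matrix $A$ of $p(x)=1+x+\cdots+x^n$, and you settle that via the module isomorphism $\Z_2^n\cong\Z_2[x]/(p)$ and the computation $\gcd\bigl(x(1+x+\cdots+x^{m}),\,p\bigr)=1$, which follows from $\gcd(m+1,2m+1)=1$. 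I checked the key intermediate step: inverting the relations of Lemma~\ref{lemma:5.1} does give $e_j=\sum_{i=1}^{n-j+1}A^i e_1$ (the sum telescopes to $e_n + (e_j+e_n)=e_j$), so the polynomial attached to $e_m$ when $n=2m$ is indeed $x+x^2+\cdots+x^{m+1}$, and the rest of the $\gcd$ argument is routine. The two proofs are comparable in length, but yours is more conceptual and strictly more informative: it shows that updating a fixed coordinate $j$ yields an exact uniform sample at time $n$ if and only if $\gcd(n-j+1,\,n+1)=1$, recovering the theorem as the special case $j=m$, $n=2m$, whereas the paper's determinant computation is tailored to the middle coordinate. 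The paper's approach, on the other hand, requires no structure theory of modules over $\Z_2[x]$ and is self-contained at the level of elementary linear algebra.
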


\begin{rmk}
  Note that if the transformation is the circular shift instead of
  $f$, then this would be equivalent to systematic scan, which
  trivially yields an exact uniform sample in exactly $n=2m$ steps.
  Thus this chain can be viewed as a small perturbation of systematic
  scan which is Markovian and still yields an exact sample in $n=2m$
  steps.
\end{rmk}

\begin{proof}
We denote by $(R_1, R_2, \ldots)$ the sequence of bits used to update
the chain.

To demonstrate how the walk evolves with time by means of an example,
Table \ref{table:1} shows the coordinates of $Y_t$ at different $t$
for $2m=6$, when starting from $\0$.

\begin{table}[h!]
  \centering
  \begin{tabular}{||c c c c c c c||}
    \hline
    Coordinate & 1 & 2 & 3 & 4 & 5 & 6 \\ [1ex] 
    \hline\hline
    t=0 & 0 & 0 & 0 & 0 & 0 & 0 \\ 
    t=1 & 0 & $R_1$ & 0 & 0 & 0 & $R_1$ \\
    t=2 & $R_1$ & $R_2$ & 0 & 0 & $R_1$ & $R_2$ \\
    t=3 & $R_2$ & $R_3$ & 0 & $R_1$ & $R_2$ & $R_3$ \\
    t=4 & $R_3$ & $R_4$ & $R_1$ & $R_2$ & $R_3$ & $R_1\oplus R_4$ \\ 
    t=5 & $R_4$ & $R_1\oplus R_5$ & $R_2$ & $R_3$ & $R_1\oplus R_4$ & $R_2\oplus R_5$\\
    t=6 & $R_1\oplus R_5$ & $R_2\oplus R_6$ & $R_3$ & $R_1\oplus R_4$ & $R_2\oplus R_5$ & $R_3\oplus R_6$ \\ [1ex]
    \hline
  \end{tabular}
  \caption{Evolution of coordinates with time for $2m=6$}
  \label{table:1}
\end{table}


Let $n = 2m$ be an even integer, and let $Z_1,Z_2,\cdots ,Z_{m}$ be
the random variables occupying the $n$ coordinates at time
$t=n$. 
The following relationships hold for any starting state
$x=(x_1,x_2,\ldots,x_{n})$:

\begin{align*}
  Z_1 &= R_1\oplus R_{m+2}\oplus \bigoplus_{i=1}^{2m} x_i  \\
  Z_2 &= R_2\oplus R_{m+3}\oplus x_1 \\
      &\vdots \\
  Z_{m-1} &= R_{m-1}\oplus R_{2m}\oplus x_{m-2}  \\
  Z_m &= R_n\oplus x_{m-1} \\
  Z_{m+1} &= R_1\oplus R_{m+1}\oplus x_m \\
  Z_{m+2} &= R_2\oplus R_{m+2}\oplus x_{m+1} \\
      &\vdots \\
  Z_{2n} &= R_n\oplus R_{2n}\oplus x_{2n-1} 
\end{align*}

This is because at $t=1$, the random variable at coordinate $n$ is
$R_1+\bigoplus_{i=1}^{n} x_i$. At time $t=m+1$, this random variable
moves to coordinate $m$ because of successive shift register
operations. Because the coordinate updated at any time along this
chain is $m$, we have that at time $t=m+2$, the random variable at
coordinate $m-1$ is $R_1+R_{m+2}\oplus \bigoplus_{i=1}^{n}
x_i$. Again, because of successive shift register operations, the
random variable $R_1+R_{n+2}\oplus \bigoplus_{i=1}^{n} x_i$ moves to
coordinate $1$ at time $n$. The random variables at other coordinates
can similarly be worked out. Thus the above system of equations can be
written in matrix form as $Z=BR+\Vec{x}$ where:

\[
  Z = (Z_1,\ldots,Z_n)^T, \quad R = (R_1,\ldots,Z_n)^T \,,
\]
and

\begin{align*}
  B_{n\times n} & =\begin{bmatrix} I_{m\times m} & C_{m\times m} \\
    I_{m\times m} & I_{m\times m} \end{bmatrix}, &
                                                   C_{m\times m} & = \begin{bmatrix} 0_{(m-1)\times 1} & I_{(m-1)\times (m-1)} \\ 0_{1\times 1} & 0_{1\times (m-1)} \end{bmatrix} \,.
\end{align*}

and
\[
  \Vec{x}=\begin{bmatrix}\oplus_{i=1}^{n} x_i\\x_1\\\vdots
    \\x_{n-1} \end{bmatrix}
\]

Note that
\[
  \det (B)=\det (I) \times \det (I-II^{-1}C) = \det (I-C)=1\neq 0 \,.
\]
The last equality follows since $\det (I-C)=1$ because $I-C$ is an
upper triangular matrix with ones along the main diagonal. Hence $B$
is an invertible matrix and if $z\in \{0,1\}^{n}$, then
$$
\P(Z=z)=\P\big(R=B^{-1}(z-\Vec{x})\big)=\frac{1}{2^{n}} \,,
$$
where the last equality follows from the fact that $R$ is uniform over
$S=\{0,1\}^{n}$. Thus the state along $Q_2$ chain at $t=2m=n$ is
uniform over $S$ and
\[
  \|Q_2^{(n)}(\0,\cdot)-\pi\|_{\text{TV}}=0, \quad \text{$n$ is even}.
\]

\end{proof}

\section{Conclusion and Open Questions}

Here we have shown that the ``shift-register'' transformation speeds
up the mixing of the walk on the hypercube, for which the stationary
distribution is uniform.  The shift-register transformation is a good
candidate for a deterministic mixing function, as shift-registers were
used for early pseudo-random number generation.


One of our original motivations for analyzing this chain was in part that the
uniform distribution corresponds to the infinite temperature Ising
measure.  Indeed, of great interest are chains on product spaces
having non-uniform stationary distributions, such as Gibbs measures. Finding deterministic transformations which speed up mixing for non-uniform distributions remains a challenging and intriguing open problem.
%

\section*{Acknowledgements}

The authors thank the reviewer for helpful comments and pointing out relevant references.

\printbibliography 
\end{document}